\newcommand{\half}{\frac{1}{2}}
\newcommand{\D}{\mathbb{D}}
 \DeclareMathOperator{\In}{(I)}
\DeclareMathOperator{\Sa}{(S)}
\newtheorem{theorem}{Theorem}
\newtheorem{lemma}{Lemma}
\theoremstyle{definition}
\title[Interpolation and sampling in small Bergman spaces]
{Interpolation and sampling in small Bergman spaces}
\author[Kristian Seip]{Kristian Seip}
\address{Department of Mathematical Sciences\\
Norwegian University of Science and Technology (NTNU)\\
 NO- 7491 Trondheim, Norway}
\email{seip@math.ntnu.no}
\thanks{The author is supported by the Research Council of
Norway grant 185359/V30. This work was started when the author
visited CMI, LATP, Universit\'{e} de Provence in the summer of 2010
and finished when he was a participant in the program \emph{Complex
Analysis and Spectral Problems} at Centre de Recerca Matem\`{a}tica,
Bellaterra in the summer of 2011. He is grateful to both
institutions for their support and hospitality.}
\subjclass[2000]{30E05, 46E22}
\begin{document}
\begin{abstract}
Carleson measures and interpolating and sampling sequences for
weighted Bergman spaces on the unit disk are described for weights
that are radial and grow faster than the standard weights
$(1-|z|)^{-\alpha}$, $0<\alpha<1$. These results make the Hardy
space $H^2$ appear naturally as a ``degenerate'' endpoint case for
the class of Bergman spaces under study.
\end{abstract}
\maketitle
\section{Introduction}
This work originates in my 1993 paper \cite{Se0} which concerns
interpolation and sampling in Bergman spaces on the unit disk with
standard radial weights $(1-|z|^2)^{-\alpha}$ and $\alpha<1$.
Following \cite{Se0}, A. Borichev, R. Dhuez, and K. Kellay
\cite{BDK} studied the same problem when the weights decay more
rapidly than any positive power of $1-|z|$ as $|z|\nearrow 1$. What
remains to be settled is then the case of nontrivial weights growing
more rapidly than $(1-|z|)^{-\alpha}$ for any $\alpha$ in $(0,1)$,
which should be thought of as dealing with Hilbert spaces of
analytic functions lying ``between" the classical Hardy and Bergman
spaces. In what follows, I will show how this can be done. Somewhat
vaguely phrased, the present analysis offers a ``smooth'' transition
from the Hardy space situation and L. Carleson's theorems (in this
context a ``degenerate'' endpoint case) and the setting of Bergman
spaces with standard weights.

Throughout this paper $w$ will be a positive and continuous function
on $[0,1)$, fixed once and for all, such that for a positive
constant $c$
\begin{equation} w(1-t)\ge c w(1-2t) \label{adm}
\end{equation} whenever $0<t\le 1/2$. We will assume that $w$ is integrable and for
convenience that
\[\int_0^1 w(x)dx =1.\]
With our fixed weight $w$, we associate the weighted Bergman space
$A^2_w$ consisting of all functions $f$ analytic in the open unit
disk $\D$ satisfying
\[ \int_{z\in \D} |f(z)|^2 w(|z|) d\sigma(z)<\infty, \]
where $\sigma$ denotes Lebesgue area measure on $\D$. The latter
integral defines a norm on $A^2_w$, but we prefer to use another
equivalent norm. Define $0\le r_n<1$ by the relation
\[ \int_{r_n}^1 w(x)dx =2^{-n}\]
for every nonnegative integer $n$, and set
\begin{equation}\label{norm} \|f\|_w^2=\sum_{n=1}^\infty 2^{-n}
\int_{0}^{2\pi} |f(r_n e^{i t})|^2 \frac{dt}{2\pi}.\end{equation}


If we had chosen to start from the sequence $(r_n)$ instead of the
weight $w$, then we would have needed to replace the condition
\eqref{adm} by the requirement that \begin{equation} \label{adm1}
\inf_{n\ge 0} \frac{1-r_n}{1-r_{n+1}}>1.\end{equation} This
alternative approach has the advantage that it permits us to
associate the Hardy space $H^2$ of the unit disk with the
``degenerate'' case when the sequence of radii $r_n$ is allowed to
be finite and $\max_n r_n=1$.

To see how the scale of Bergman spaces with standard weights fits
into this context, we introduce the following scale of weights
associated with $w$:
\[ w_{\alpha}(x)=(1-\alpha) w(x)\left(\int_{x}^1 w(t)
dt\right)^{-\alpha}
\] for $\alpha<1$. It is plain that we also have $w_{\alpha}(1-t)\ge
c_{\alpha} w_{\alpha}(1-2t)$ for some constant $c_\alpha$, and that
$\int_0^1 w_\alpha(x)dx=1$. If we choose $w\equiv 1$, then the
family of weights $w_\alpha$ corresponds to the standard weighted
Bergman spaces. Note that substituting $w$ by $w_\alpha$ corresponds
to replacing $2^{-n}$ in \eqref{norm} by $2^{-(1-\alpha)n}$. It may
be verified that this implies that the Carleson measures are
described in the same way for all the spaces $A^2_{w_\alpha}$ and
that the notion of density that we will use for $A_w^2$, also
applies to describe interpolating and sampling sequences for each of
the spaces $A^2_{w_\alpha}$.

The next sections contain two theorems, the first describing the
Carleson measures for our space $A^2_w$ and the second the
interpolating and sampling sequences for the same space. The first
theorem is easily proved using Carleson's embedding theorem, while
the second requires somewhat delicate technicalities. A main
ingredient in the proof of the second theorem is a lemma involving a
method of redistribution and atomization of certain Riesz measures.
 \vspace{1mm}

\emph{Acknowledgements.} I am grateful to Alexander Borichev for
bringing the problems studied in the present work to my attention
and for some discussions on the subject matter. I am also indebted
the anonymous referee for a careful reading and for some pertinent remarks
on the exposition of the paper. 

\section{Carleson measures for $A_w^2$}

Given a Hilbert space ${\mathcal H}$ of analytic functions on $\D$,
we say that a nonnegative Borel measure $\mu$ on $\D$ is a Carleson
measure for ${\mathcal H}$ if there exists a positive constant $C$
such that
\[ \int_{\D} |f(z)|^2 d\mu(z)\le C \|f\|_{{\mathcal H}}^2\] holds
for every $f$ in ${\mathcal H}$. The Carleson constant of $\mu$ is
the smallest possible constant $C$ for which this holds. In our
case, ${\mathcal H}$ will be either $A^2_w$ or the Hardy space
$H^2$, where the latter consists of all analytic functions $f$ in
$\D$ for which
\[ \|f\|_{H^2}=\sup_{r<1} \int_{0}^{2\pi} |f(r e^{i t})|^2
\frac{dt}{2\pi}<\infty.\]
A classical theorem of L. Carleson \cite{CaC} says that $\mu$ is a Carleson measure for 
$H^2$ if and only if there is a positive constant $C$ such that we have 
$\mu(Q_\zeta)\le C (1-|\zeta|)$ for every Carleson ``square'' $Q_\zeta=\{z: \
|\zeta|<|z|<1,\ \arg(z\overline{\zeta})<1-|\zeta|\}$, i.e., for every point $\zeta$ in $\D\setminus \{0\}$.

Before stating our theorem on Carleson measures, we introduce the
following notations, to be retained for the remainder of this paper.
Set
\[ \Omega_n=\{z:\ r_n\le |z| < r_{n+1}\}, \]
and let $\mu_n$ be the measure such that
$d\mu_n(z)=\chi_{\Omega_n}(z)d\mu(z)$ whenever a nonnegative Borel
measure $\mu$ on $\D$ is given. The notation $U(z)\lesssim V(z)$ (or
equivalently $V(z)\gtrsim U(z)$) means that there is a constant $C$
such that $U(z)\leq CV(z)$ holds for all $z$ in the set in question,
which may be a space of functions or a set of numbers. If both
$U(z)\lesssim V(z)$ and $V(z)\lesssim U(z)$, then we write
$U(z)\simeq V(z)$.
\begin{theorem} A nonnegative Borel measure $\mu$ on $\D$ is a Carleson
measure for $A^2_w$ if and only if each $\mu_n$ is a Carleson
measure for $H^2$ with Carleson constant $\lesssim 2^{-n}$.
\end{theorem}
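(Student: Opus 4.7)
Plan. Both directions rest on the elementary decomposition
\[
\int_\D |f|^2\,d\mu = \sum_n \int_{\Omega_n} |f|^2\,d\mu_n
\]
together with Carleson's classical embedding theorem applied annulus-by-annulus.

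\emph{Sufficiency.} Assume each $\mu_n$ is an $H^2$-Carleson measure with constant $\lesssim 2^{-n}$. Since $\mu_n$ is supported in $\Omega_n\subset\{|z|<r_{n+2}\}$, I would regard $\mu_n$ as a measure on this subdisk and apply Carleson's theorem there. The doubling property of $(1-r_n)$ coming from \eqref{adm1} makes Carleson squares relative to $\D$ and relative to $\{|z|<r_{n+2}\}$ comparable for those $\zeta$ whose Carleson square meets $\Omega_n$; consequently, the Carleson constant of $\mu_n$ with respect to the Hardy space of $\{|z|<r_{n+2}\}$ is again $\lesssim 2^{-n}$. Since $f\in A^2_w$ automatically lies in that Hardy space with squared norm $\int_0^{2\pi}|f(r_{n+2}e^{it})|^2\,dt/(2\pi)$, the Carleson embedding yields
\[
\int_{\Omega_n}|f|^2\,d\mu_n \lesssim 2^{-n}\int_0^{2\pi}|f(r_{n+2}e^{it})|^2\,\frac{dt}{2\pi}.
\]
Summing over $n$ and shifting the index by two (which costs only a multiplicative constant, since $1-r_n\simeq 1-r_{n+2}$) yields $\int_\D |f|^2\,d\mu\lesssim \|f\|_w^2$.

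\emph{Necessity.} Assume $\mu$ is Carleson for $A^2_w$ with constant $C$. We must verify $\mu_n(Q_\zeta)\lesssim C\cdot 2^{-n}(1-|\zeta|)$ for every $n$ and every $\zeta\in\D\setminus\{0\}$. The statement is vacuous when $Q_\zeta\cap\Omega_n=\emptyset$; the binding case is $|\zeta|\in[r_n,r_{n+1}]$, where $1-|\zeta|\simeq 1-r_n$. For such $\zeta$ I would test the hypothesis with the kernel-type function $f_\zeta(z)=(1-\bar\zeta z)^{-s}$, where $s$ is a positive integer chosen large enough in terms of the doubling constant $\rho$ of $(1-r_k)$. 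A power-series expansion and circle-by-circle integration give
\[
\int_0^{2\pi}|1-\bar\zeta r_k e^{it}|^{-2s}\,\frac{dt}{2\pi}\simeq (1-|\zeta|r_k)^{1-2s}\simeq (1-r_{\min(n,k)})^{1-2s}.
\]
For $s$ large enough the resulting series $\sum_k 2^{-k}(1-r_{\min(n,k)})^{1-2s}$ is geometric with dominant term at $k=n$, giving $\|f_\zeta\|_w^2\simeq 2^{-n}(1-r_n)^{1-2s}$. An elementary computation with $Q_\zeta$ shows $|f_\zeta(z)|^2\gtrsim (1-r_n)^{-2s}$ for $z\in Q_\zeta$. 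Inserting these bounds into $\int|f_\zeta|^2\,d\mu\le C\|f_\zeta\|_w^2$ and restricting the integral to $Q_\zeta\cap\Omega_n$ produces $\mu_n(Q_\zeta)\lesssim C\cdot 2^{-n}(1-r_n)\simeq C\cdot 2^{-n}(1-|\zeta|)$. The remaining case $|\zeta|<r_n$ is handled by covering $Q_\zeta\cap\Omega_n$ with $O((1-|\zeta|)/(1-r_n))$ smaller Carleson squares of width $\simeq 1-r_n$ centered on $|w|=r_n$ and summing the bounds already established.

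\emph{Main obstacle.} The one genuinely computational step is the estimate $\|f_\zeta\|_w^2\simeq 2^{-n}(1-r_n)^{1-2s}$: one must verify that the series $\sum_k 2^{-k}(1-r_{\min(n,k)})^{1-2s}$ is geometric and dominated by its $k=n$ term, which requires $s$ large enough that $(1/2)\rho^{2s-1}>1$, where $\rho>1$ is the doubling constant from \eqref{adm1}. This is achievable for any $\rho>1$. Once this is in hand, both directions reduce to routine decomposition and summation, matching the author's remark that the theorem is easily proved using Carleson's embedding theorem.
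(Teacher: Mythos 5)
Your proof is correct and follows essentially the same route as the paper: Carleson's embedding theorem applied annulus by annulus (transferred to the subdisk $|z|<r_{n+2}$ via \eqref{adm1}) for sufficiency, and the test functions $(1-\overline{\zeta}z)^{-s}$ with $s$ large relative to the doubling constant for necessity. You merely supply details the paper leaves implicit, such as the computation of $\|f_\zeta\|_w^2$ and the reduction to squares with top center in $\Omega_n$.
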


\begin{proof}
The proof relies on Carleson's theorem \cite{CaC}. For the
necessity, it suffices to check Carleson ``squares'' $Q_\zeta=\{z: \
|\zeta|<|z|<1,\ \arg(z\overline{\zeta})<1-|\zeta|\} $ whose top
center $\zeta$ is in $\Omega_n$. We use the test function
$f_\zeta(z)=(1-\overline{\zeta}z)^{-\gamma}$ with $\gamma$ so large
that
\[ \| f_\zeta\|_w^2\simeq 2^{-n} (1-|z|)^{-2\gamma+1}; \]
this can be achieved because of \eqref{adm1}. It follows readily
from the Carleson measure condition that $\mu(Q_\zeta\cap
\Omega_n)\lesssim 2^{-n} (1-|\zeta|)$ as required by Carleson's
theorem.

To prove the sufficiency, we note that if $\mu_n$ is a Carleson
measure for $H^2$ with Carleson constant $\lesssim 2^{-n}$, then, in
view of \eqref{adm1}, the same holds for $H^2$ of the smaller disk
$r_{n+2} \D$. Given an arbitrary function $f$ in $A^2_w$, we sum the
corresponding Carleson measure estimates over $n$ and get
\[ \int_{\D} |f(z)|^2 d\mu(z) \lesssim \sum_{n=0}^{\infty} 2^{-n}
\int_{0}^{2\pi} |f(r_{n+2} e^{i t})|^2 \frac{dt}{2\pi}. \]
\end{proof}

\section{Interpolation and sampling in $A_w^2$}

Let ${\mathcal H}$ be as in the previous section, and let $K_z$ be
the reproducing kernel for ${\mathcal H}$ at the point $z$ in $\D$.
We say that a sequence $\Lambda=(\lambda_j)$ of distinct points in
$\D$ is an interpolating sequence for ${\mathcal H}$ if we can solve
the interpolation problem $f(\lambda_j)=a_j$ whenever the sequence
$(a_j)$ satisfies the admissibility condition
\[ \sum_j \frac{|a_j|^2}{K_{\lambda_j}(\lambda_j)} <\infty;\]
the sequence $\Lambda$ is said to be a sampling sequence if there
are positive constants $A$ and $B$ such that
\begin{equation}\label{frame} A\|f\|_{\mathcal H}^2\le \sum_j
\frac{|f(\lambda_j)|^2}{K_{\lambda_j}(\lambda_j)} \le B
\|f\|_{\mathcal H}^2 \end{equation} for every $f$ in ${\mathcal H}$.

We are interested in such sequences when ${\mathcal H}=A^2_w$, and
we therefore need a precise estimate for $K_z(z)$ in this case. To this end, we recall
that $K_z(z)$ is the square of the norm of the functional of point evaluation
$f\mapsto f(z)$ on $A^2_w$. By
\eqref{adm1}, we have that $1-|z|\le c (r_{n+2}-|z|)$ when $z$ is in
$\Omega_n$ for some constant $c$ independent of $n$. Thus
\begin{equation}\label{pointwise} |f(z)|^2\le C 2^n (1-|z|)^{-1} \|f\|_w^2
\end{equation} for every $f$ in $A^2_w$ and $z$ in $\Omega_n$ with $C$
independent of $n$. On the other hand, choosing $f_z$ as in the
proof of Theorem~1, we get that
\[ |f_z(z)|^2\gtrsim 2^n (1-|z|)^{-1} \|f_z\|_w^2\] if $\gamma$ is
again chosen sufficiently large; we conclude that
\begin{equation}\label{kerneldiag}
K_z(z)\simeq 2^n (1-|z|)^{-1}
\end{equation}
for $z$ in $\Omega_n$ and all $n$ when $K_z$ is the reproducing
kernel for $A^2_w$.

We denote by $\varrho(z,\zeta)$ the pseudohyperbolic distance
between two points $z$ and $\zeta$ in $\D$, i.e., \[
\varrho(z,\zeta)=\left|\frac{z-\zeta}{1-\overline{\zeta}
z}\right|.\] Let $\Lambda=(\lambda_j)$ be a separated sequence in
$\D$, which as usual we take to mean that $\inf_{j\neq
l}\varrho(\lambda_j,\lambda_l)>0$. For a given $z$, let
$n(z)=n(|z|)$ be the nonnegative integer such that $r_{n(z)}\le |z|
<r_{n(z)+1}$. We then define the following densities:
\[ D_w^{+}(\Lambda)=\limsup_{m \to
\infty}\frac{1}{m}\sup_{|z|<1}\ \sum_{|\lambda_j|\le
r_{n(z)+m}}(1-\varrho(z,\lambda_j))\] and
\[ D_w^{-}(\Lambda)=\liminf_{m \to \infty}\frac{1}{m}\inf_{|z|<1}\
\sum_{|\lambda_j|< r_{n(z)+m}}(1-\varrho(z,\lambda_j)).
\]
We future reference, we record the following consequence of 
Theorem~1.
\begin{lemma}\label{special}
The measure \[
\mu=\sum_{n=0}^\infty 2^{-n} \sum_{r_n < |\lambda_j|<r_{n+1}}
(1-|\lambda_j|)\delta_{\lambda_j}\] $\mu$ is a Carleson measure for
$A^2_w$ if and only if $D_w^+(\Lambda)<\infty$.
\end{lemma}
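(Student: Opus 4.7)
The plan is to use Theorem~1 to reformulate the Carleson property of $\mu$ as a per-annulus counting condition on Carleson squares, and then to identify this condition with $D_w^+(\Lambda)<\infty$.

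By Theorem~1, $\mu$ is a Carleson measure for $A_w^2$ if and only if
\begin{equation*}
(\ast)\quad \sum_{\lambda_j \in Q_\zeta \cap \Omega_n}(1-|\lambda_j|) \lesssim 1-|\zeta|
\end{equation*}
uniformly in $\zeta \in \D$ and $n\ge 0$, since $\mu_n(Q_\zeta)=2^{-n}\sum_{\lambda_j\in Q_\zeta\cap\Omega_n}(1-|\lambda_j|)$. Since $|1-\bar\zeta\lambda_j|\simeq 1-|\zeta|$ on $Q_\zeta$ and hence $1-\varrho(\zeta,\lambda_j)\simeq (1-|\lambda_j|)/(1-|\zeta|)$ there, $(\ast)$ is equivalent to $\sum_{\lambda_j\in Q_\zeta\cap\Omega_n}(1-\varrho(\zeta,\lambda_j))\lesssim 1$.

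For $(\ast)\Rightarrow D_w^+(\Lambda)<\infty$, I apply Carleson's embedding for $\mu_n$ to the normalized $H^2$-kernel $f_z(w)=(1-|z|^2)^{1/2}/(1-\bar z w)$ (of $H^2$-norm one), which yields $\sum_{\lambda_j\in\Omega_n}(1-\varrho(z,\lambda_j))\lesssim 1$ uniformly in $n$ and $z$. Fixing $z\in\Omega_N$, the contributions from annuli $\Omega_n$ with $n\ge N$ sum to $O(m)$ over $n=N,\dots,N+m-1$. For $n<N$, the condition $|\lambda_j|\le r_{n+1}<r_N\le|z|$ forces $|1-\bar z\lambda_j|\gtrsim (1-r_n)+|\arg(\lambda_j\bar z)|$, and a dyadic decomposition in $|\arg(\lambda_j\bar z)|$ combined with the separation of $\Lambda$ gives $\sum_{\lambda_j\in\Omega_n}(1-\varrho(z,\lambda_j))\lesssim (1-r_N)/(1-r_n)$; the geometric decay of $1-r_n$ granted by \eqref{adm1} makes this summable over $n<N$ to a constant. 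Altogether $\sum_{|\lambda_j|\le r_{N+m}}(1-\varrho(z,\lambda_j))\lesssim m+1$, so $D_w^+(\Lambda)<\infty$.

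For the converse $D_w^+(\Lambda)<\infty\Rightarrow(\ast)$, monotonicity in $m$ yields a uniform bound $\sup_z\sum_{|\lambda_j|\le r_{n(z)+1}}(1-\varrho(z,\lambda_j))\le C_0$. Given $\zeta\in\Omega_N$ and $n\ge N$, I tile the angular window $\{w:|\arg(w\bar\zeta)|<1-|\zeta|\}$ by $\simeq (1-|\zeta|)/(1-r_n)$ test points $z_i^\ast=r_n e^{i\theta_i}$ spaced $\sim 1-r_n$ apart in angle. Since $1-\varrho(z_i^\ast,\lambda_j)\simeq 1$ for $\lambda_j\in\Omega_n$ lying in the $i$-th tile, the uniform bound applied at each $z_i^\ast$ (with $m=1$) caps the count per tile; summing yields $\#(Q_\zeta\cap\Omega_n)\lesssim (1-|\zeta|)/(1-r_n)$, which since $1-|\lambda_j|\simeq 1-r_n$ on $\Omega_n$ is exactly $(\ast)$. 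The main obstacle will be the forward direction's handling of inner annuli $n<N$, where one must simultaneously exploit the decay of $1-\varrho(z,\lambda_j)$ at large pseudohyperbolic distance, the separation of $\Lambda$, and the geometric decay \eqref{adm1} of the radii $r_n$.
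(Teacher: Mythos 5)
The paper offers no written proof of this lemma (it is recorded as a direct consequence of Theorem~1), and your reduction --- Theorem~1 plus Carleson's geometric characterization turns the statement into the per-annulus condition $(\ast)$, which is then matched against the density --- is exactly the intended frame. Your forward direction is essentially correct: the normalized-kernel computation gives a uniform $O(1)$ bound per annulus, and the refined bound for the inner annuli (which should read $\lesssim (1-r_N)/(1-r_{n+1})$ rather than $(1-r_N)/(1-r_n)$, since $1-|\lambda_j|$ can be as small as $1-r_{n+1}$ on $\Omega_n$) still sums geometrically over $n<N$ by \eqref{adm1}.

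The converse, however, contains a genuine error. You assert that $1-\varrho(z_i^\ast,\lambda_j)\simeq 1$ for $\lambda_j\in\Omega_n$ in the $i$-th tile, that the count per tile is therefore capped by $C_0$, and that $1-|\lambda_j|\simeq 1-r_n$ on $\Omega_n$. All three statements require $1-r_{n+1}\gtrsim 1-r_n$, which is not among the hypotheses: \eqref{adm1} bounds the ratio $(1-r_n)/(1-r_{n+1})$ only from below, and for the weights this paper is actually about (e.g.\ $w(x)=(1-x)^{-1}\log^{-2}(e/(1-x))$, which gives $1-r_n\simeq e^{-2^n}$) that ratio is unbounded. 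In that regime a single tile at radius $r_n$ can contain arbitrarily many separated points of $\Omega_n$ near its outer edge, each with $1-\varrho(z_i^\ast,\lambda_j)\simeq (1-|\lambda_j|)/(1-r_n)\ll 1$; the intermediate count bound $\#(Q_\zeta\cap\Omega_n)\lesssim (1-|\zeta|)/(1-r_n)$ is then simply false for sequences with $D_w^+(\Lambda)<\infty$, even though $(\ast)$ itself is true. The repair is to avoid passing to a point count: for $\lambda_j$ in the $i$-th tile one has $1-\varrho(z_i^\ast,\lambda_j)\gtrsim (1-|\lambda_j|)/(1-r_n)$, so the uniform density bound at $z_i^\ast$ gives $\sum_{\text{tile } i}(1-|\lambda_j|)\lesssim 1-r_n$ directly, and summing over the $\simeq (1-|\zeta|)/(1-r_n)$ tiles (taking $\zeta$ itself as the test point when $n=N$, where $1-|\zeta|$ may be smaller than $1-r_N$) yields $(\ast)$ with constant $\lesssim C_0$. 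With that replacement your argument is complete and matches the route the paper intends.
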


Our main result is the following theorem.

\begin{theorem}
$\In$ A sequence $\Lambda$ is an interpolating sequence for $A^2_w$
if and only if it is separated and $D^+_w(\Lambda)<(\log 2)/2.$
$\Sa$ A separated sequence $\Lambda$ is a sampling sequence for
$A^2_w$ if and only if $D^+_w(\Lambda)<\infty$ and
$D^-_w(\Lambda)>(\log 2)/2.$
\end{theorem}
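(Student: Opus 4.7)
The plan is to adapt the template of Seip's 1993 paper \cite{Se0} to the present setting, leveraging the Hardy-space-style decomposition in \eqref{norm} together with Theorem 1 and the pointwise kernel estimate \eqref{kerneldiag}. The easy direction of necessity in both parts is immediate from Lemma \ref{special}: if $\Lambda$ is either interpolating or sampling then the measure $\sum_j K_{\lambda_j}(\lambda_j)^{-1}\delta_{\lambda_j}$ must be Carleson for $A^2_w$, and by \eqref{kerneldiag} this is precisely the measure of Lemma \ref{special}, so $D^+_w(\Lambda)<\infty$. Separation of an interpolating $\Lambda$ is standard: apply the interpolation to $(a_j)$ supported at a single $\lambda_j$ with $|a_j|^2 = K_{\lambda_j}(\lambda_j)$ and exploit \eqref{pointwise} at a neighboring $\lambda_\ell$ together with a Schwarz-type estimate.

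The strict density bounds at the critical value $(\log 2)/2$ are the heart of necessity. For interpolation, I would pick an admissible sequence equal to $1$ at a single point $\lambda_0\in\Omega_{n_0}$ and zero elsewhere; the resulting $f\in A^2_w$ vanishes on $\Lambda\setminus\{\lambda_0\}$ with $\|f\|_w^2\lesssim K_{\lambda_0}(\lambda_0)^{-1}\simeq 2^{-n_0}(1-|\lambda_0|)$. Applying Jensen's formula on the disk $|z|<r_{n_0+m}$ and bounding $|f(r_{n_0+m}e^{it})|$ by \eqref{pointwise} together with \eqref{norm}, the left-hand side of Jensen grows like $m\log 2$ while the right-hand side controls $\sum_{|\lambda_j|<r_{n_0+m}}\log(1/\varrho(\lambda_0,\lambda_j))$; dividing by $m$ and letting $m\to\infty$ gives $D^+_w(\Lambda)\le(\log 2)/2$. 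A small Blaschke perturbation promotes $\le$ to $<$, exactly as in \cite{Se0}. The sampling side dualizes this: if $D^-_w(\Lambda)\le(\log 2)/2$, one constructs a sequence of ``peak'' functions concentrated where the counting defect occurs and shows that removing a single $\lambda_j$ from $\Lambda$ eventually breaks the lower bound in \eqref{frame}, contradicting a standard perturbation-stability property of sampling sequences.

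For sufficiency I would exploit the redistribution and atomization lemma alluded to in the introduction. The aim is to convert the Riesz measure $\nu_\Lambda = \sum_j (1-|\lambda_j|)\delta_{\lambda_j}$ into a smooth measure $\tilde\nu$, comparable to $\nu_\Lambda$ on each annulus $\Omega_n$, whose logarithmic potential $U(z)=\int\log|1-\overline{\zeta}z|^{-1}d\tilde\nu(\zeta)$ is harmonic off $\Lambda$ and whose conjugate defines an analytic function $F$ with zero set essentially $\Lambda$. The exponential growth of $|F|$ on the circle $|z|=r_n$ is then controlled by $D^\pm_w(\Lambda)\cdot n\log 2$ up to a bounded error, so under the hypothesis $D^+_w<(\log 2)/2$ (resp.\ $D^-_w>(\log 2)/2$) the function $F(z)/\bigl((z-\lambda_j)F'(\lambda_j)\bigr)$ satisfies the Hardy-space estimates on each $|z|=r_n$ required by \eqref{norm} to serve as a peak function for interpolation (resp.\ a reproducing-kernel substitute for sampling). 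Summing peak functions against an admissible $(a_j)$ in the interpolation case and running a covering argument over the $\Omega_n$ in the sampling case, one concludes via Lemma \ref{special}.

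The principal obstacle is the atomization step. In \cite{Se0} the power weight lets one work globally with an explicit subharmonic minorant; here the only handle on $w$ is \eqref{adm} and its geometric consequence \eqref{adm1}, so the potential-theoretic construction must be performed annulus by annulus with the quantitative loss at each scale $\Omega_n$ calibrated to the weight $2^{-n}$ in \eqref{norm}. Matching these annular pieces analytically, while preserving the precise rate $(\log 2)/2$ that balances the growth of the conjugate potential against the Hardy-space norms on the circles $|z|=r_n$, is where the ``somewhat delicate technicalities'' reside.
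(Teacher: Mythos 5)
Your outline follows the same architecture as the paper's proof --- Jensen's formula applied to peak/Blaschke-type test functions for necessity, and a canonical product built by redistributing and atomizing Riesz measure for sufficiency --- but two of the four implications have genuine gaps as sketched. The most serious is in the sufficiency for sampling. When $D^-_w(\Lambda)>(\log 2)/2$ the zero set $\Lambda$ carries \emph{more} Riesz mass per block of annuli than the target majorant $2^{(1+\varepsilon)n(z)/2}$ can account for, so no analytic function with zero set (essentially) $\Lambda$ can satisfy $|F(z)|\simeq 2^{(1+\varepsilon)n(z)/2}\varrho(z,\Lambda)$: the correction obtained by atomizing the excess $V(z)-(1+\varepsilon)n(z)$ must be \emph{divided out}, producing a meromorphic $G=F/H$ with an auxiliary pole set $\Lambda'$ pseudohyperbolically separated from $\Lambda$ (Lemma~\ref{key}, part (S)). Your ``reproducing-kernel substitute'' $F(z)/\bigl((z-\lambda_j)F'(\lambda_j)\bigr)$ does not exist as an analytic object with the required two-sided growth, and the reproducing formula one actually uses,
\[ f(z)=\sum_j \frac{f(\lambda_j)}{G'(\lambda_j)}\,\frac{G(z)}{z-\lambda_j}\,\frac{1-|z|^2}{1-\overline{z}\lambda_j}, \]
depends on this meromorphic structure --- and, in both parts, on the extra M\"obius factor, without which the series does not converge in $A^2_w$; the boundedness of the resulting operator still requires the $I_1/I_2$ splitting across the radii $r_l$ with the $\varepsilon$ of room supplied by the strict density inequality.

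Second, for the necessity of $D^-_w(\Lambda)>(\log 2)/2$ your sketch names only the perturbation-stability step (which indeed supplies the strictness) but not the main quantitative estimate. The paper tests the lower inequality of \eqref{frame} on $f_{l,m}(z)=B_{l,m}(z)/(1-\overline{\lambda_l}z)$, where $B_{l,m}$ is the finite Blaschke product over the $\lambda_j\neq\lambda_l$ with $|\lambda_j|\le r_{n(\lambda_l)+m}$, multiplied by one further Blaschke factor so that the sampling sum runs only over the tail $|\lambda_j|>r_{n(\lambda_l)+m}$; Theorem~1 together with Carleson's embedding theorem shows that this tail is $\lesssim 2^{-m}$ times $\|f_{l,m}\|_w^2$, while \eqref{pointwise} bounds $\|f_{l,m}\|_w^2$ from below by $e^{-2m(D^-_w(\Lambda)+o(1))}$ times the kernel normalization; comparing exponents as $m\to\infty$ gives $D^-_w(\Lambda)\ge(\log 2)/2$. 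Without identifying why the far sum decays geometrically in $m$, the argument does not close. A smaller caution on the interpolation necessity: to keep the sharp constant $(\log 2)/2$ you must bound the Jensen mean over the circle $|z|=r_{n(\lambda_l)+m}$ by the $L^2$ mean coming from \eqref{norm} (via the arithmetic--geometric mean inequality), not by the pointwise estimate \eqref{pointwise}; the latter introduces a factor $(1-r_{n(\lambda_l)+m})^{-1}$ whose logarithm grows linearly in $m$ by \eqref{adm1} and destroys the critical value.
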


In the ``degenerate'' case of $H^2$ (when the sequence of radii
$r_n$ is allowed to be finite and $\max_n r_n=1$), H. Shapiro and A.
Shield's $L^2$ version of Carleson's interpolation theorem \cite{SS,
Ca} gives that the condition $D^+_w(\Lambda)<(\log 2)/2$ in part (I)
should be replaced by the simpler condition  \[
\sup_{|z|<1}\sum_{j}(1-\varrho(z,\lambda_j))<\infty; \] it is
well-known that there is no counterpart to part (S) when $A_w^2$ is
replaced by $H^2$.

The densities used in Theorem~2 are defined somewhat differently
from those used in the original paper \cite{Se0} and in \cite{Se}.
These densities can also be defined via harmonic measure as shown in
\cite{OS}. It seems clear that our Theorem~2 can be rephrased in a
similar way using harmonic measure. One can of course also prove
similar results for Bergman $L^p$ spaces without any essential
changes of the arguments.

The remainder of this paper is devoted to proving respectively the
necessity (Section 4) and the sufficiency (Section 5) of the
conditions of Theorem~2.

\section{Proof of the necessity of the conditions of Theorem 2}

We begin with part (I). To see that an interpolating sequence is
separated, we can argue similarly as in the proof of Theorem~1.
Namely, if $\Lambda$ is an interpolating sequence for $A^2_w$, then
the sequence $\Lambda\cap \Omega_n$ is an interpolating sequence for
$H^2$ of the smaller disk $r_{n+2}\D$, and then, in view of
\eqref{adm1}, we may argue as we do in the classical $H^2$ case. We
omit the details of this routine argument. Lemma~\ref{special} gives that 
$D^+_w(\lambda)<\infty$ holds when $\Lambda$ is an interpolating sequence.

Just as on pages 57--58 in \cite{Se} we may modify the definition of
the upper density:
\[ D_w^{+}(\Lambda)=\limsup_{m \to
\infty}\frac{1}{m}\sup_{\lambda_l}\ \sum_{|\lambda_j|\le
r_{n(\lambda_l)+m}}(1-\varrho(\lambda_l,\lambda_j)).\] We also
repeat the argument on pages 58--59 in \cite{Se}. This means that we
only need to verify that
\[ \sum_{|\lambda_j|\le
r_{n(\lambda_l)+m}}(1-\varrho(\lambda_l,\lambda_j)) \le m (\log 2)/2
+ C \] holds with $C$ depending only on the constant of
interpolation for $\Lambda$. Assuming $\Lambda$ is an interpolating
sequence and in view of \eqref{kerneldiag}, we may solve the problem
$f_l(\lambda_{l})=2^{n(\lambda_l)/2}/\sqrt{1-|\lambda_l|}$ and
$f_l(\lambda_j)=0$ for $j\neq l$ with uniform control of norms. Let
us for simplicity set $r=r_{n(\lambda_l)+m}$. The function
$\tilde{f}_l(z)=f_l(rz)$ has $H^2$ norm $\lesssim
2^{(n(\lambda_j)+m)/2}$. Now set \[ \varphi(z)=\frac{\lambda_j
-rz}{r-\overline{\lambda_j} z}.\] We apply Jensen's formula to
$\tilde{f}_l\circ \varphi$ and get
\[(n(\lambda_j)\log 2+\log\frac{1}{1-|\lambda_l|})/2
+\sum_{j\neq l, |\lambda_j|<r}
\log\frac{|\lambda_j-\lambda_l|}{|r-\overline{\lambda_l}\lambda_j/r|}
=\int_{0}^{2\pi} \log
|\tilde{f}_l\circ\varphi(e^{it})|\frac{dt}{2\pi}.\] By the
arithmetic--geometric mean inequality,
\[ \int_{0}^{2\pi} \log
|\tilde{f}_l\circ\varphi(e^{it})|\frac{dt}{2\pi} \le
\log\|\tilde{f}_l\circ \varphi\|_{H^2}. \] Since the norm of the
composition operator is bounded by an absolute constant times $ (1-|\lambda_l|)^{-1/2}$, we obtain
\[ \int_{0}^{2\pi} \log
|\tilde{f}_l\circ\varphi(e^{it})|dt \le C + ((n(\lambda_j)+m)\log
2+\log\frac{1}{1-|\lambda_l|})/2.\] What remains is to prove that
\[\sum_{j\neq l, |\lambda_j|<r}
\log\frac{1}{|r-\overline{\lambda_l}\lambda_j/r|}\ge \sum_{j\neq l,
|\lambda_j|<r} \log\frac{1}{|1-\overline{\lambda_l}\lambda_j|}+C\]
for some constant $C$ independent of $l$ and $m$. This is a
consequence of the inequality $|1-z/a|\le |1-z|/a$ which holds
whenever $|z|\le a\le 1.$

 We turn to part (S). We obtain the condition
 $D^+_w(\Lambda)<\infty$ from the right inequality of \eqref{frame},
 cf. Lemma~\ref{special}.
Following the reasoning on page 59 of \cite{Se}, we find that it
suffices to show that $D^-_w(\Lambda)\ge (\log 2)/2$. To prove this,
we pick a point $\lambda_l$ and look at the function
\[f_{l,m}(z)=\frac{1}{1-\overline{\lambda_l}z}B_{l,m}(z),\] where $B_{l,m}$ is
the finite Blaschke product with zeros at the points $\lambda_j$ for
which $\lambda_j\neq \lambda_l$ and $|\lambda_j|\le
r_{n(\lambda_l)+m}$. By \eqref{pointwise}, we have
\[ |f_{l,m}(\lambda_l)|^2(1-|\lambda_l|) 2^{-n(\lambda_l)}\lesssim
\|f_{l,m}\|_w^2, \] which implies that \[
e^{-2m(D^{-}(\Lambda)+o(1))}(1-|\lambda_l|)^{-1}
2^{-n(\lambda_l)}\lesssim \|f_{l,m}\|_w^2\] when $m\to\infty$ and
$\lambda_l$ is chosen appropriately. We now use the fact that the
operators of multiplication by a single Blaschke factor are
uniformly bounded below on $A^2_w$. Thus applying the left
inequality of \eqref{frame} to the function \[
f_{l,m}(z)\frac{z-\lambda_l}{1-\overline{\lambda_l}z}\] and using
\eqref{kerneldiag}, we get
\[ \|f_{l,m}\|_w^2\lesssim \sum_{|\lambda_j|>r_{n(\lambda_l)+m}}
|f_{l,m}(\lambda_j)|^2(1-|\lambda_j|) 2^{-n(\lambda_j)}. \] Now applying
Theorem~1 and Carleson's embedding theorem for $H^2$, we get
\[ \|f_{l,m}\|_w^2\lesssim(1-|\lambda_l|)^{-1}
2^{-n(\lambda_l)-m},\] and the desired estimate for $D^-_w(\Lambda)$
thus follows.

\section{Proof of the sufficiency of the conditions of Theorem 2}

The main technical ingredient is the following lemma.

\begin{lemma} \label{key} Let $\Lambda$ be a separated sequence in the unit disk $\D$.
$\In$ When $D^+_w(\Lambda)<(\log 2)/2$, there are $\varepsilon>0$
and an analytic function $G(z)$ in $\D$ with zero set
$\Lambda'\supseteq \Lambda$ and
\[ |G(z)|^2\simeq 2^{(1-\varepsilon) n(z)} \varrho^2(z,\Lambda').\]
$\Sa$ When $D^-_w(\Lambda)>(\log 2)/2$ and $D^+_w(\Lambda)<\infty$,
there are $\varepsilon>0$ and a meromorphic function $G(z)$ in $\D$
with zero set $\Lambda$ and pole set $\Lambda'$ that is
pseudohyperbolically separated from $\Lambda$ and such that
\[ |G(z)|^2\simeq 2^{(1+\varepsilon) n(z)} \varrho^2(z,\Lambda)\varrho^{-2}(z,\Lambda'). \]
\end{lemma}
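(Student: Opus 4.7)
I would build $G$ via a Weierstrass-type canonical product whose zeros (and, in (S), poles) arise by atomizing a signed Riesz measure built from $\Lambda$ and the target growth.

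Setup. Let $\Phi(z) := -\log\int_{|z|}^1 w(t)\,dt$, a smooth radial subharmonic function with $e^{\Phi(z)}\simeq 2^{n(z)}$, and let $\omega := (2\pi)^{-1}\Delta\Phi$ be its smooth positive radial Riesz measure on $\D$. In part (I), pick $\varepsilon>0$ with $D^+_w(\Lambda) < (1-\varepsilon)(\log 2)/2$ and consider the signed measure
\[ \nu := \tfrac{1-\varepsilon}{2}\,\omega - \sum_{\lambda\in\Lambda}\delta_\lambda. \]
In part (S), pick $\varepsilon>0$ with $D^-_w(\Lambda) > (1+\varepsilon)(\log 2)/2$ and set $\nu := \sum_{\lambda\in\Lambda}\delta_\lambda - \tfrac{1+\varepsilon}{2}\,\omega$. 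Unpacking the definitions of $D^\pm_w$ against the Riesz mass of $\Phi$ over the relevant pseudohyperbolic "windows" (collections of $\Omega_n$ stacked $m$-levels deep), the density hypothesis translates into the assertion that $\nu$ has nonnegative total mass on every sufficiently deep such window---equivalently, that $\int\log\varrho(z,\cdot)\,d\nu(\cdot)$ is uniformly bounded above on $\D$.

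Atomization and construction of $G$. The central technical step is the redistribution-and-atomization lemma promised in the introduction: given a signed measure of the above form that is window-nonnegative, it produces a separated positive purely atomic measure $\tilde\nu = \sum_{\lambda''_j}\delta_{\lambda''_j}$ whose Green potential satisfies
\[ \int_\D\log\varrho(z,\zeta)\,d\tilde\nu(\zeta) - \int_\D\log\varrho(z,\zeta)\,d\nu(\zeta) = O(1) \quad (z\in\D). \]
A local adjustment of atom positions ensures $\Lambda'' := \supp\tilde\nu$ is disjoint from (and, in (S), pseudohyperbolically separated from) $\Lambda$. Take $\Lambda' := \Lambda\cup\Lambda''$ in (I) and $\Lambda' := \Lambda''$ in (S), and define $G$ to be the Horowitz-type canonical product with zero divisor $\Lambda'$ in (I), respectively with zero divisor $\Lambda$ and pole divisor $\Lambda'$ in (S); convergence of the product follows from the density bound on $\tilde\nu$. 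Reading off $\log|G|$ via the Green-potential representation of its Riesz measure, substituting $\tilde\nu \mapsto \nu$ via the atomization identity above, and invoking the Riesz decomposition of $\Phi$, one recovers the claimed pointwise size estimate in each part.

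Main obstacle. The atomization lemma is the technical heart. Converting a signed, merely window-nonnegative measure into a separated positive atomic measure with integer unit masses, while controlling the Green potential globally and respecting the support constraints against $\Lambda$, requires a careful annulus-by-annulus redistribution: surplus smooth mass of $\omega$ within each $\Omega_n$ must be quantized and placed so as to neutralize the counting excess or deficit of $\Lambda$ there, and the harmonic boundary error produced by each redistribution step must be kept uniformly controlled. This is where I anticipate the argument to demand the most care.
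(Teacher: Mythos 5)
Your overall architecture --- encode the target growth $2^{(1\mp\varepsilon)n(z)}$ and the divisor $\Lambda$ as Riesz measures, form their signed difference, and atomize --- is the same as the paper's, but the step you isolate as ``the central technical step'' is precisely where the proposal has a genuine gap, and the lemma you posit is not correctly formulated. First, the density hypothesis does \emph{not} translate into ``nonnegative total mass of $\nu$ on every sufficiently deep window'': the sums defining $D_w^{\pm}$ weight each $\lambda_j$ by $1-\varrho(z,\lambda_j)\simeq (1-|z|)(1-|\lambda_j|)/|1-\overline{\lambda_j}z|^2$, i.e.\ they are Poisson-type averages of the counting measure rather than window masses, so a sequence can have modest mass in every window and still violate the density bound by angular concentration (and conversely). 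Second, and more seriously, window-nonnegativity of a signed measure carrying unit \emph{negative} atoms is far too weak to yield a positive separated atomic measure whose Green potential agrees with that of $\nu$ up to $O(1)$ uniformly on $\D$; the negative atoms must first be neutralized by redistributing nearby positive mass, and controlling the error of that redistribution is the whole problem.

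The paper's mechanism for this is absent from your write-up: one sweeps, by the explicit balayage formula defining $V_j$, the Riesz mass of the Blaschke blocks $U_j=\sum_{n=mj}^{mj+m-1}\log|B_n|$ onto the circles $|z|=r_{mj}$, proves $\bigl|\sum_j(V_j-U_j)\bigr|\le C$ off a pseudohyperbolic neighbourhood of $\Lambda$, and then observes that the density hypothesis, read as $-U_j\le(1-\varepsilon)m$ (resp.\ $-U_j\ge(1+\varepsilon)m$) on $|z|=r_{mj}$, becomes a \emph{pointwise} domination between the linear densities, on each such circle, of the swept measure and of the Riesz measure of $n(z)(\log 2)/2$. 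Only after this does one atomize --- and then only a positive measure supported on a circle with controlled density, which is exactly the situation handled by the counterpart of Lemma 5 of \cite{Se}. So the redistribution onto circles is not a refinement of the atomization; it is the step that converts the averaged density information into something that can be atomized at all. You correctly flag this as the main obstacle, but the atomization lemma as you state it cannot be proved from window-nonnegativity alone, so the proof is incomplete at its decisive point.
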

\begin{proof}
In either case we begin by letting $F$ be an analytic function
having $\Lambda$ as its zero set. We may write
\[ \log
|F(z)|=\sum_{n=0}^\infty (\log|B_n(z)|+h_n(z)),
\]
where $B_n$ is the Blaschke product with zeros at the $\lambda_j$ in
$\Omega_n$ and $h_n$ is an appropriate harmonic function that makes
the sum converge. The basic idea is to approximate the subharmonic
function \[ U_j(z)= \sum_{n=mj}^{mj+m-1} \log|B_n(z)|\] by another
subharmonic function $V_j(z)$ with Riesz measure supported by the
circle $|z|=r_{mj}$; the point of this redistribution of the Riesz
measure is that the latter measure is more easily atomized.

We choose $m$ so large that either (I) $-U_j(z)\le (1-\varepsilon)m$
for $|z|=r_{mj}$, where $2\varepsilon =(\log 2)/2-D^{+}_w(\Lambda)$
or (S) $-U_j(z)\ge (1+\varepsilon)m$ for $|z|=r_{mj}$, where
$2\varepsilon =D^{-}_w(\Lambda)-(\log 2)/2$. We claim that the
function
\[
V_j(z)=\frac{1}{\pi(1-r_{mj}^2)}\int_0^{2\pi}\log\left|\frac{r_{mj}e^{it}-z}
{1-\overline{z}r_{mj}e^{it}}\right| U_j(r_{mj}e^{it}) dt \] does the
job in the sense that
\[ \left|\sum_{j=1}^\infty (V_j(z)-U_j(z))\right|\le C \]
whenever $\varrho(\Lambda,z)\ge \delta>0$ with $C$ depending on
$\delta$. It is plain that we have
\[ \left|\sum_{j:\ r_{mj}\le |z|} (V_j(z)-U_j(z))\right|\le C\]
whenever $\varrho(\Lambda,z)\ge \delta>0$. To deal with the case
when $|z|<r_{mj}$, we note that then, by harmonicity, we may write
\[
U_j(z)=\int_0^{2\pi}\frac{1-|z|^2/r_{mj}^2}{|1-\overline{z}e^{it}/r_{jm}|^2}
U_j(r_{mj}e^{it}) \frac{dt}{2\pi}.\] We approximate the logarithm in
the integral defining $V_j$ as
\[-\log\left|\frac{r_{mj}e^{it}-z}
{1-\overline{z}r_{mj}}\right|=\frac{1}{2}\frac{(1-r_{mj}^2)(1-|z|^2)}
{|1-\overline{z}r_{mj}e^{it}|^2}+O\left(\left[\frac{(1-r_{mj}^2)(1-|z|^2)}
{|1-\overline{z}r_{mj}e^{it}|^2}\right]^2\right)\] when $\varrho(z,
r_{mj}e^{it})\to 1$. Here the second order term causes no problem,
so we only need to estimate the difference
\[ D_r(z)=\frac{1-|z|^2/r^2}{|1-z/r|^2}-\frac{1-|z|^2}{|1-rz|^2} \]
when $|z|<r$. It suffices to observe that
\[ D_r(z)=
\frac{1}{r^2}\left(
\frac{(1-r^2)(1-|z|^2)^2}{|1-z/r|^2|1-rz|^2}-\frac{1-r^2}{|1-z/r|^2}\right)
\]
because this identity implies that
\[ \sum_{j:r_{mj}>|z|}|D_{r_{mj}}(z)|\lesssim 1.\]

 We are now ready to construct the
desired functions $G$ in parts $\In$ and $\Sa$ resepctively. To begin with, note that we have
\[ 2^{n(z)/2}\simeq \exp\left(\sum_{j=0}^\infty \frac{\log 2}{4\pi(1-r_{mj})}
\int_0^{2\pi} \left(\log |z-r_{mj}e^{it}|-\log
r_{mj}\right)dt\right).\] In other words, the function $n(z)$ can be
approximated by a subharmonic function whose Riesz measure is
concentrated on the circles $|z|=r_{mj}$ with density $\log 2/(2
\pi(1-r_{mj})\log 2)$ with respect to arc length measure on the
respective circles. We now employ the counterpart of Lemma 5 on page
50 in \cite{Se}. In part $\In$, we thus produce an analytic function
$H(z)$ by atomizing the Riesz measure of $(1-\varepsilon)n(z)-V(z)$ for a sufficiently small $\varepsilon>0$,
and then set $G=FH$. Similarly, in part $\Sa$,  
we construct an analytic function $H$ by atomizing the Riesz measure of $V(z)-(1+\varepsilon)n(z)$ for a sufficiently small $\varepsilon>0$, and then set $G=F/H$. It is plain that the proof in \cite{Se} carries
over to this situation. (The fact that the total mass of the Riesz
measure we want to atomize over the circle $|z|=r_{mj}$ may be
non-integer is of no significance. Just leave the ``remainder''
untouched; it corresponds to a bounded part of the subharmonic
function.) 
\end{proof}

\begin{proof}[Proof of the sufficiency of the conditions in part $\In$ of Theorem~2]
We want to solve the problem $f(\lambda_j)=a_j$ with $f$ in $A^2_w$
when
\[ \sum_j |a_j|^2 (1-|\lambda_j|)2^{-n(\lambda_j)}<\infty.\]
We will in fact construct a linear operator doing the job:
\[ f(z)=\sum_j
\frac{a_j}{G'(\lambda_j)}\frac{G(z)}{z-z_j}\frac{1-|\lambda_j|^2}{1-\overline{\lambda_j}z},\]
just as formula (53) on page 53 of \cite{Se}. It follows from
Lemma~\ref{key} that
\[ |G'(\lambda_j)|\simeq
2^{(1-\varepsilon)n(\lambda_j)/2}(1-|\lambda_j|)^{-1} \ \ \
\text{and} \ \ \ \frac{|G(z)|}{|z-\lambda_j|}\lesssim
\frac{2^{(1-\varepsilon)n(z)/2}}{|1-\overline{\lambda_j}z|}\] so
that
\[|f(z)|\lesssim 2^{(1-\varepsilon)n(z)/2}\sum_j |a_j|2^{-(1-\varepsilon)n(\lambda_j)/2}
\frac{(1-|\lambda_j|)^2}{|1-\overline{\lambda_j}z|^2}. \] We write
\[h_n(z)=2^{(1-\varepsilon)n(z)/2}\sum_{j:\ r_n\le
|\lambda_j|<r_{n+1}} |a_j|2^{-(1-\varepsilon)n(\lambda_j)/2}
\frac{(1-|\lambda_j|)^2}{|1-\overline{\lambda_j}z|^2}.\] Thus we
need to show that \[
\sum_{l=1}^{\infty}2^{-l}\int_{0}^{2\pi}\left(\sum_{n=0}^\infty
h_n(r_l e^{it})\right)^2dt\lesssim \sum_j |a_j|^2
(1-|\lambda_j|)2^{-n(\lambda_j)}.\] We have \[
\sum_{l=1}^{\infty}2^{-l}\int_0^{2\pi} \left(\sum_{n=0}^\infty
h_n(r_l e^{it})\right)^2dt\le I_1 +I_2,\] where
\[
I_1=\sum_{l=1}^{\infty}2^{-l+1}\int_0^{2\pi}\left(\sum_{n<l} h_n(r_l
e^{it})\right)^2 dt \ \ \ \text{and} \ \
 \ I_2=\sum_{l=1}^{\infty}2^{-l+1}\int_0^{2\pi}\left(\sum_{n\ge l} h_n(r_l
e^{it})\right)^2 dt.\] We compute the $L^2$ integral in $I_1$ by
duality. Noting that $h_n(z)$ is a weighted sum of Poisson kernels and using the Carleson measure condition
for Poisson integrals of $L^2$ functions along with the Cauchy--Schwarz inequality, we then get
\[ I_1 \lesssim \sum_{l=1}^{\infty}2^{-\varepsilon l} \left(\sum_{n<l}
2^{\varepsilon n/2} \left(\sum_{j:\ r_n \le |\lambda_j|<r_{n+1}}
|a_j|^2 (1-|\lambda_j|)2^{-n(\lambda_j)}\right)^{\half}\right)^2.\]
By the Cauchy--Schwarz inequality, we get
\[ I_1 \lesssim \sum_{l=1}^{\infty}2^{-\varepsilon l/2} \sum_{n<l}
2^{\varepsilon n/2} \sum_{j:\ r_n \le |\lambda_j|<r_{n+1}} |a_j|^2
(1-|\lambda_j|)2^{-n(\lambda_j)},\] and the desired estimate is
obtained if we change the order of summation. To deal with $I_2$, we
note that \[ \left(\sum_{n\ge l} h_n(z)\right)^2  \le
2^{(1-\varepsilon)n(z)} \sum_{|\lambda_j|\ge r_l} |a_j|^2
2^{-(1-\varepsilon)
n(\lambda_j)}\frac{(1-|\lambda_j|)^2}{|1-\overline{\lambda_j}z|^{2}}
\sum_{|\lambda_k|\ge r_l}
\frac{(1-|\lambda_k|)^2}{|1-\overline{\lambda_k} z|^{2}}.\] Thus
\[ \left(\sum_{n\ge l} h_n(r_l e^{it})\right)^2  \lesssim
2^{(1-\varepsilon) l} \sum_{|\lambda_j|\ge r_l} |a_j|^2
2^{-(1-\varepsilon)
n(\lambda_j)}\frac{(1-|\lambda_j|)^2}{|1-\overline{\lambda_j}r_le^{ir_l}|^{2}}
 \]
from which it follows that
\[ \int_0^{2\pi}\left(\sum_{n\ge l} h_n(r_l e^{it})\right)^2dt
\lesssim 2^{(1-\varepsilon)l} \sum_{|\lambda_j|\ge r_l} |a_j|^2
2^{-(1-\varepsilon)
n(\lambda_j)}\frac{(1-|\lambda_j|)^{2}}{(1-r_l)}.\] Finally, we get
\[ I_2\lesssim \sum_j |a_j|^2
2^{-(1-\varepsilon) n(\lambda_j)}(1-|\lambda_j|)^{2} \sum_{r_l\le
|\lambda_j|}\frac{2^{-\varepsilon l}} {(1-r_l)}.\] In the latter
sum, we can assume that $\varepsilon$ is so small (if need be) that
the terms grow exponentially, so that we may arrive at the desired
estimate.
\end{proof}

\begin{proof}[Proof of the sufficiency of the conditions in part $\Sa$ of Theorem~2]
We start from the formula
\[ f(z)=\sum_j
\frac{f(\lambda_j)}{G'(\lambda_j)}\frac{G(z)}{z-z_j}
\frac{1-|z|^2}{1-\overline{z}\lambda_j},\] which holds for every
function in $A_w^2$, cf. formula (54) on page 53 of \cite{Se}. Here
$G$ is the meromorphic function of Lemma~\ref{key}. We get that
\[|f(z)|\lesssim 2^{(1+\varepsilon)n(z)/2}(1-|z|)
\sum_j |f(\lambda_j)|2^{-(1+\varepsilon)n(\lambda_j)/2}
\frac{(1-|\lambda_j|)}{|1-\overline{\lambda_j}z|^2}. \] We write
\[g_n(z)=2^{(1+\varepsilon)n(z)/2}(1-|z|)\sum_{j:\ r_n\le
|\lambda_j|<r_{n+1}}
|f(\lambda_j)|2^{-(1+\varepsilon)n(\lambda_j)/2}
\frac{(1-|\lambda_j|)}{|1-\overline{\lambda_j}z|^2}.\] Thus we need
to show that \[
\sum_{l=1}^{\infty}2^{-l}\int_{0}^{2\pi}\left(\sum_{n=0}^\infty
g_n(r_l e^{it})\right)^2dt\lesssim \sum_j |f(\lambda_j)|^2
(1-|\lambda_j|)2^{-n(\lambda_j)}.\] We write \[
\sum_{l=1}^{\infty}2^{-l}\int_0^{2\pi} \left(\sum_{n=0}^\infty
g_n(r_l e^{it})\right)^2 dt \le J_1 +J_2,\] where
\[
J_1=\sum_{l=1}^{\infty}2^{-l+1}\int_0^{2\pi}\left(\sum_{n<l} g_n(r_l
e^{it})\right)^2 dt \ \ \ \text{and} \ \
 \ J_2=\sum_{l=1}^{\infty}2^{-l+1}\int_0^{2\pi}\left(\sum_{n\ge l} g_n(r_l
e^{it})\right)^2 dt.\] We compute the $L^2$ integral in $J_1$ by
duality. Using the Carleson measure condition and the
Cauchy--Schwarz inequality, we get
\[ J_1 \lesssim \sum_{l=1}^{\infty} (1-r_l)^{\half}2^{\varepsilon l} \sum_{n<l}
\frac{2^{-\varepsilon n}}{(1-r_n)^{\half}} \sum_{j:\ r_n \le
|\lambda_j|<r_{n+1}} |f(\lambda_j)|^2
(1-|\lambda_j|)2^{-n(\lambda_j)}.\] Changing the order of summation,
we get the desired result. (We need to assume, if need be, that
$\varepsilon$ is so small that $2^{\varepsilon l}(1-r_l)^{\half}$
decays exponentially.) To deal with $J_2$, we note that
\[ \left(\sum_{n\ge l} g_n(z)\right)^2 \le
2^{(1+\varepsilon)l}(1-|z|)^2 \sum_{|\lambda_j|\ge r_l}
|f(\lambda_j)|^2 2^{-(1+\half\varepsilon)
n(\lambda_j)}\frac{(1-|\lambda_j|)}{|1-\overline{\lambda_j}z|^{2}}
\sum_{|\lambda_k|\ge r_l}
\frac{(1-|\lambda_k|)}{|1-\overline{\lambda_k} z|^{2}}2^{-\half
\varepsilon n(\lambda_k)}.\] Applying the Carleson measure condition
to the sum to the right, we get
\[ \left(\sum_{n\ge l} g_n(r_l e^{it})\right)^2  \le
2^{l(1+\half\varepsilon)} \sum_{|\lambda_j|\ge r_l}
|f(\lambda_j)|^2(1-|\lambda_j|) 2^{-(1+\half\varepsilon)
n(\lambda_j)}\frac{(1-r_l)}{|1-\overline{\lambda_j}r_l e^{it}|^{2}}
\] from which it follows that
\[ \int_0^{2\pi}\left(\sum_{n\ge l} g_n(r_l e^{it})\right)^2dt
\lesssim 2^{\varepsilon l/2} \sum_{|\lambda_j|\ge r_l}
|f(\lambda_j|^2 (1-|\lambda_j|)2^{-(1+\half\varepsilon)
n(\lambda_j)}.\] We now sum over $l$ and get the desired result by
changing the order of summation.
\end{proof}

\end{document}